\newcommand{\sumprime}{\if@display\sideset{}{'}\sum%
            \else\sum'\fi}
\begin{document}

\numberwithin{equation}{section}

\newtheorem{theorem}{Theorem}[section]
\newtheorem{proposition}[theorem]{Proposition}
\newtheorem{conjecture}[theorem]{Conjecture}
\def\theconjecture{\unskip}
\newtheorem{corollary}[theorem]{Corollary}
\newtheorem{lemma}[theorem]{Lemma}
\newtheorem{observation}[theorem]{Observation}
\newtheorem{definition}{Definition}
\numberwithin{definition}{section} 
\newtheorem{remark}{Remark}
\def\theremark{\unskip}
\newtheorem{kl}{Key Lemma}
\def\thekl{\unskip}
\newtheorem{question}{Question}
\def\thequestion{\unskip}
\newtheorem{example}{Example}
\def\theexample{\unskip}
\newtheorem{problem}{Problem}

\thanks{The first two authors are supported by National Natural Science Foundation of China, No. 12271101;  the third author is supported by National Natural Science Foundation of China, No. 12071310.}

\address [Bo-Yong Chen] {School of Mathematical Sciences, Fudan University, Shanghai, 200433, China}
\email{boychen@fudan.edu.cn}

\address [Yuanpu Xiong] {School of Mathematical Sciences, Fudan University, Shanghai, 200433, China}
\email{ypxiong@fudan.edu.cn}

\address [Liyou Zhang] {School of Mathematical Sciences, Capital Normal University, Beijing, 100048, China}
\email{zhangly@cnu.edu.cn}

\title{Equality between the Bergman metric and Carath\'{e}odory metric}
\author{Bo-Yong Chen, Yuanpu Xiong and Liyou Zhang}

\date{}

\maketitle

\begin{center}
\emph{In memory of Professor Qi-Keng Lu}
\end{center}

\begin{abstract}
We present an equality between the Bergman metric and Carath\'{e}odry metric.
\end{abstract}

\section{Introduction}
Let $\Omega$ be a bounded domain in $\mathbb{C}^n$.  Let $B_\Omega(z;X)$ and $C_\Omega(z;X)$ be the Bergman metric and  Carath\'{e}odry metric on $\Omega$ respectively,  i.e.,   for $z\in \Omega$ and holomorphic tangent vector $X$,
\begin{eqnarray}
B_\Omega(z;X) & = & \frac{1}{\sqrt{K_\Omega(z)}}\max\left\{|Xf(z)|: f\in\mathcal{O}(\Omega),\ f(z)=0,\ \|f\|_{L^2(\Omega)}\leq1\right\},\label{eq:B}\\ 
C_\Omega(z;X) & = & \max\left\{|Xf(z)|: f\in\mathcal{O}(\Omega),\ f(z)=0, |f|<1 \right\},\label{eq:C}
\end{eqnarray}
where $K_\Omega$ stands for the Bergman kernel.  Both $B_\Omega$ and $C_\Omega$ are  invariant under biholomorphic mappings.   

A famous result going back to Q.-K. Lu (=\,K. H.  Look) states that 
\begin{equation}\label{eq:B>C}
B_\Omega(z;X) \ge C_\Omega(z;X)
\end{equation}
(see \cite{Lu57,Lu58}).     Burbea \cite{Burbea,Burbea2} and Hahn \cite{Hahn76,Hahn78} gave a surprisingly simple proof  of \eqref{eq:B>C} as follows.   Set $b_z=K_\Omega(\cdot,z)/\sqrt{K_\Omega(z)}$ and take $c_{z,X}\in \mathcal O(\Omega)$ satisfying $c_{z,X}(z)=0$,  $|c_{z,X}|<1$ and $Xc_{z,X}(z)=C_\Omega(z;X)$.  Use $b_z c_{z,X}$ as a candidate for \eqref{eq:B},  
one obtains 
\begin{equation}\label{eq:Hahn}
B_\Omega(z;X) \ge \frac{1}{ \sqrt{K_\Omega(z)} } \frac{ |b_z(z)| |Xc_{z,X}(z)|}{\|b_z c_{z,X} \|_{L^2(\Omega)}}=\frac{C_\Omega(z;X)}{\|b_z c_{z,X}\|_{L^2(\Omega)}}.
\end{equation}
Since $\|b_z c_{z,X} \|_{L^2(\Omega)}< \|b_z\|_{L^2(\Omega)}=1$, we get \eqref{eq:B>C}. Moreover, strict inequality holds when $X\neq0$.

In this note,  we show that  \eqref{eq:Hahn} can be improved to an equality.

\begin{theorem}\label{th:Bergman_Caratheodory}
Let $A^2(\Omega)=L^2(\Omega)\cap \mathcal O(\Omega)$ be the Bergman space on $\Omega$.  Given $z,X$,  define the following closed linear subspace of $A^2(\Omega):$
\[
S_{z,X}:=\{f\in{A^2(\Omega)}:  f(z)=0,\ Xf(z)=0\}.
\]
Let $P_{z,X}$ be  the orthogonal projection from $A^2(\Omega)$ to $S_{z,X}^\perp$,  the orthogonal complement of $S_{z,X}$ in $A^2(\Omega)$.  Then we have
\begin{equation}\label{eq:Bergman_Caratheodory}
B_\Omega (z;X)=\frac{ C_\Omega(z;X)}{\|P_{z,X}(b_z c_{z,X})\|_{L^2(\Omega)}}.
\end{equation}
\end{theorem}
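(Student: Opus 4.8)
The plan is to push both extremal problems into the two-dimensional space $S_{z,X}^{\perp}$. By the reproducing property there are $k_z=K_\Omega(\cdot,z)$ and $k_{z,X}\in A^2(\Omega)$ with $f(z)=\langle f,k_z\rangle$ and $Xf(z)=\langle f,k_{z,X}\rangle$ for every $f\in A^2(\Omega)$, so that $S_{z,X}=\{k_z,k_{z,X}\}^{\perp}$ and $S_{z,X}^{\perp}=\operatorname{span}\{k_z,k_{z,X}\}$. When $X\neq0$ these two vectors are linearly independent — test the relation $ak_z+bk_{z,X}=0$ against the functions $1$ and $w\mapsto\langle w-z,X\rangle$, both in $A^2(\Omega)$ since $\Omega$ is bounded — hence $\dim S_{z,X}^{\perp}=2$. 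The key point is that, because $k_z,k_{z,X}\in S_{z,X}^{\perp}$, the numbers $f(z)$ and $Xf(z)$ depend only on $P_{z,X}f$; equivalently, $P_{z,X}$ preserves the interpolation data $\bigl(f(z),Xf(z)\bigr)$.

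First I would rewrite $B_\Omega$. If $f\in A^2(\Omega)$ has $f(z)=0$ and $\|f\|\le1$, then $P_{z,X}f$ has the same value and $X$-derivative at $z$ and $\|P_{z,X}f\|\le\|f\|$, so the maximum in \eqref{eq:B} is unchanged if $f$ is restricted to $S_{z,X}^{\perp}$. Inside $S_{z,X}^{\perp}$ the condition $f(z)=0$ cuts out the one-dimensional space $V:=\{f\in S_{z,X}^{\perp}:\langle f,k_z\rangle=0\}$, one-dimensional because $f\mapsto\langle f,k_z\rangle$ is a nonzero functional on $S_{z,X}^{\perp}$ (nonzero as $\langle k_z,k_z\rangle=K_\Omega(z)>0$). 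Fixing a unit vector $e$ spanning $V$ and maximizing $|Xf(z)|$ over $f\in V$ with $\|f\|\le1$ is immediate and gives
\[
\sqrt{K_\Omega(z)}\,B_\Omega(z;X)=|Xe(z)|.
\]

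Next I would run $g:=b_zc_{z,X}$ through this picture. Since $|c_{z,X}|<1$ and $b_z\in A^2(\Omega)$ we have $g\in A^2(\Omega)$; from $c_{z,X}(z)=0$, the product rule, and $b_z(z)=\sqrt{K_\Omega(z)}$, $Xc_{z,X}(z)=C_\Omega(z;X)$, we get $g(z)=0$ and $Xg(z)=\sqrt{K_\Omega(z)}\,C_\Omega(z;X)$. Put $h:=P_{z,X}(b_zc_{z,X})$. By the preservation of interpolation data, $h(z)=0$ and $Xh(z)=\sqrt{K_\Omega(z)}\,C_\Omega(z;X)$, so $h\in V$, say $h=\beta e$ with $|\beta|=\|h\|$ and $Xh(z)=\beta\,Xe(z)$. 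Taking moduli,
\[
\sqrt{K_\Omega(z)}\,C_\Omega(z;X)=|Xh(z)|=\|h\|\,|Xe(z)|=\|h\|\,\sqrt{K_\Omega(z)}\,B_\Omega(z;X),
\]
and cancelling $\sqrt{K_\Omega(z)}$ and dividing by $\|h\|$ (positive, since $C_\Omega(z;X)>0$ on a bounded domain when $X\neq0$) yields \eqref{eq:Bergman_Caratheodory}.

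I do not expect a serious obstacle here: the one thing to verify carefully is that $P_{z,X}$ fixes $\bigl(f(z),Xf(z)\bigr)$, because this is exactly what makes $P_{z,X}(b_zc_{z,X})$ an admissible competitor for \eqref{eq:B} — in fact the norm-minimal one carrying the right data — and what turns Hahn's estimate into an equality (note $\|P_{z,X}(b_zc_{z,X})\|\le\|b_zc_{z,X}\|$, which recovers \eqref{eq:Hahn}). The case $X=0$ is trivial and can be excluded.
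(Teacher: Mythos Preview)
Your argument is correct. It is closest to the paper's second proof (Section~4): both recognise that $P_{z,X}(b_zc_{z,X})$ lands in the one-dimensional line $V\subset S_{z,X}^{\perp}$ spanned by the Bergman maximizer, and read off the equality from there. The difference is in how you place the maximizer in $S_{z,X}^{\perp}$. The paper uses a calculus-of-variations step to show the minimizer $m_{z,X}$ is orthogonal to $S_{z,X}$; you instead note that $P_{z,X}$ preserves the interpolation data $(f(z),Xf(z))$ --- because $f-P_{z,X}f\in S_{z,X}$ --- so projecting any competitor in \eqref{eq:B} can only improve it, forcing the maximizer into $S_{z,X}^{\perp}$ automatically. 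Your route is a bit more economical: it avoids both the variational argument and the machinery of the paper's first proof (the explicit derivative kernel $\overline{X}_zK_\Omega(\cdot,z)$ of Proposition~2.1 and the identity $B_\Omega^2=X\overline X\log K_\Omega$), relying only on the abstract Riesz representative $k_{z,X}$. Conversely, the paper's second proof makes the stronger pointwise statement \eqref{eq:B_C_1} explicit; your argument gives the same once one observes that your unit vector $e$ coincides with $b_{z,X}$ up to a unimodular factor.
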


Note that the maximizer $c_{z,X}$ might not be unique. However, \eqref{eq:Bergman_Caratheodory} indicates that $P_{z,X}(b_zc_{z,X})$ is independent of the choices of $c_{z,X}$. Since $\|P_{z,X}(b_z c_{z,X})\|_{L^2(\Omega)}\le \|b_z c_{z,X}\|_{L^2(\Omega)}$, so \eqref{eq:Bergman_Caratheodory} implies \eqref{eq:Hahn}. We shall present two proofs of \eqref{eq:Bergman_Caratheodory}. The first proof is motivated by the original approach of Lu \cite{Lu57} and the second proof is based on the calculus of variations, which actually gives a stronger equality
\begin{equation}\label{eq:B_C_1}
\frac{C_\Omega(z;X)}{B_\Omega(z;X)}b_{z,X}=P_{z,X}(b_zc_{z,X}),
\end{equation}
where $b_{z,X}$ is the unique maximizer in \eqref{eq:B} with $Xb_{z,X}(z)>0$. Moreover, \eqref{eq:B_C_1} can be used to show that strict inequality in \eqref{eq:Hahn} holds for some $z$ and $X$ on simple domains like bidiscs (see Proposition \ref{prop:bidisc_neq}). On the other hand, equality in \eqref{eq:Hahn} holds on  balls (see Proposition \ref{prop:ball_eq}). It is natural to ask the following
\begin{question}
Is $\Omega$ biholomorphic to the unit ball if equality in (1.4) holds for some choice of $c_{z,X}$?
\end{question}

\section{A reproducing formula for derivatives}

 Recall that the Bergman kernel of $\Omega$ is given by  
\begin{equation}\label{eq:OB}
K_\Omega (\zeta,z)=\sum^\infty_{j=0}h_j(\zeta)\overline{h_j(z)}
\end{equation}
where $\{h_j\}^\infty_{j=0}$ is an/any complete orthonormal basis of $A^2(\Omega)$.  Of fundamental importance in Bergman's theory  is the following reproducing formula: 
\begin{equation}\label{eq:reproducing}
f(z)=\int_{\zeta\in\Omega}{f(\zeta)}\overline{K_\Omega(\zeta,z)},\ \ \ \forall\,f\in{A^2(\Omega)},\ z\in\Omega.
\end{equation}
In particular,  we have
\begin{equation}\label{eq:reproducing1}
K_\Omega(z)=\int_{\zeta\in\Omega}|K_\Omega(\zeta,z)|^2, \ \ \ \forall\,z\in\Omega.
\end{equation}

The following reproducing formula for derivatives was used in some references (see,  e.g., \cite{Lu57}).  Since we are unable to find a proof in literature,  it is reasonable to present a proof here.

\begin{proposition}\label{prop:reproducing_deravatives}
For any holomorphic tangent vector $X=\sum^n_{j=1}X_j\partial/\partial{z_j}$ and $z\in \Omega$,  we have 
\begin{equation}\label{eq:reproducing_derivatives}
Xf(z)=\int_{\zeta\in\Omega}{f(\zeta)}\overline{\overline{X}_{z}K_\Omega(\zeta,z)},\ \ \ \forall\,f\in{A^2(\Omega)},
\end{equation}
where 
\[
\overline{X}_zK_\Omega(\zeta,z):=\sum^n_{j=1}\overline{X}_j\frac{\partial{K_\Omega(\zeta,z)}}{\partial\overline{z}_j}.
\]
\end{proposition}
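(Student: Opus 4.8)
The plan is to recognize $f\mapsto Xf(z)$ as a bounded linear functional on the Hilbert space $A^2(\Omega)$, apply the Riesz representation theorem, and then identify the representing function with $\overline{X}_zK_\Omega(\cdot,z)$ by expanding in an orthonormal basis. First I would fix $z\in\Omega$ and a closed ball $\overline{B(z,r)}\subset\Omega$; the Cauchy estimates give a constant $C=C(z,r,X)$ with $|Xf(z)|\le C\|f\|_{L^2(B(z,r))}\le C\|f\|_{L^2(\Omega)}$ for every $f\in A^2(\Omega)$, so $\Lambda_{z,X}\colon f\mapsto Xf(z)$ is a bounded linear functional. By Riesz representation there is a unique $g_{z,X}\in A^2(\Omega)$ with $Xf(z)=\langle f,g_{z,X}\rangle=\int_{\zeta\in\Omega}f(\zeta)\overline{g_{z,X}(\zeta)}$ for all $f\in A^2(\Omega)$, and the proposition reduces to showing $g_{z,X}(\zeta)=\overline{X}_zK_\Omega(\zeta,z)$.

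Next I would identify $g_{z,X}$ using a complete orthonormal basis $\{h_j\}$ of $A^2(\Omega)$. Writing $g_{z,X}=\sum_j c_jh_j$ with $\sum_j|c_j|^2<\infty$ and testing the Riesz identity against $f=h_j$ yields $Xh_j(z)=\langle h_j,g_{z,X}\rangle=\overline{c_j}$, hence $c_j=\overline{Xh_j(z)}$ and $g_{z,X}(\zeta)=\sum_j\overline{Xh_j(z)}\,h_j(\zeta)$. On the other hand, the series $K_\Omega(w,\zeta)=\sum_j h_j(w)\overline{h_j(\zeta)}$ converges locally uniformly on $\Omega\times\Omega$, so for fixed $\zeta$ it may be differentiated term by term in $w$; applying $X$ in the $w$-variable and setting $w=z$ gives $\sum_j Xh_j(z)\,\overline{h_j(\zeta)}$, and taking complex conjugates together with the Hermitian symmetry $\overline{K_\Omega(w,\zeta)}=K_\Omega(\zeta,w)$ turns this into $\overline{X}_zK_\Omega(\zeta,z)=\sum_j\overline{Xh_j(z)}\,h_j(\zeta)=g_{z,X}(\zeta)$. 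Combining with the Riesz identity gives \eqref{eq:reproducing_derivatives}.

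The step that I expect to need the most care is the term-by-term differentiation of the kernel series --- that is, invoking the standard fact that $\sum_j h_j(w)\overline{h_j(\zeta)}$ converges uniformly on compact subsets of $\Omega\times\Omega$ (so that Weierstrass's theorem applies and, in particular, $\sum_j|Xh_j(z)|^2<\infty$). If one prefers to avoid bases, an alternative is to work on the complex line $\lambda\mapsto z+\lambda X$: write $Xf(z)=\varphi'(0)$ with $\varphi(\lambda)=f(z+\lambda X)$, represent $\varphi'(0)$ by the one-variable Cauchy integral $\frac{1}{2\pi i}\oint_{|\lambda|=\rho}\lambda^{-2}\varphi(\lambda)\,d\lambda$ for small $\rho$, insert the reproducing formula \eqref{eq:reproducing} for $\varphi(\lambda)$, and swap the contour integral with the integral over $\Omega$ by Fubini; the swap is legitimate because $\|K_\Omega(\cdot,z+\lambda X)\|_{L^2(\Omega)}=\sqrt{K_\Omega(z+\lambda X)}$ is bounded for $|\lambda|=\rho$ and Cauchy--Schwarz controls the integrand. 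Evaluating the inner $\lambda$-integral by Cauchy's formula again then produces $\overline{\overline{X}_zK_\Omega(\zeta,z)}$, as desired.
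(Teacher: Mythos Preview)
Your proposal is correct, but the route differs from the paper's. The paper works directly with difference quotients: it writes $Xf(z)=\lim_{t\to0}t^{-1}(f(z+tX)-f(z))$, inserts the reproducing formula \eqref{eq:reproducing} at $z+tX$ and at $z$, and then passes the limit inside the integral via the dominated convergence theorem; the dominating function is built from Cauchy estimates on $\overline{X}_zK_\Omega(\zeta,\cdot)$ over a small ball and Fubini applied to $\int_\Omega\int_{B}|K_\Omega(\zeta,w)|^2$. Your primary argument instead recognizes $f\mapsto Xf(z)$ as a bounded functional, invokes Riesz, and identifies the representer through an orthonormal basis and term-by-term differentiation of the kernel series; the only analytic input you need is the locally uniform convergence of $\sum_j h_j(w)\overline{h_j(\zeta)}$, which is standard. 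Your alternative (Cauchy integral on the complex line plus Fubini) is closest in spirit to the paper's argument but replaces the DCT step by a contour integral and a single Fubini swap. All three are valid; the paper's approach is the most self-contained (no bases, no Riesz beyond the reproducing property itself), your Riesz approach is conceptually the cleanest once one accepts the uniform convergence of the kernel series, and your Cauchy--Fubini variant is the shortest to write down once the reproducing formula is in hand.
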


\begin{proof}
For any $f\in \mathcal O(\Omega)$,  we have 
\[
Xf(z)=\lim_{\mathbb C\ni t\rightarrow0}\frac{f(z+tX)-f(z)}{t}.
\]
Here we identify $X$ with a complex vector $(X_1,\cdots,X_n)$ for convenience. Since $K_\Omega(\zeta,z)$ is anti-holormorphic in $z$,  we get 
\[
\overline{X}_z{K_\Omega}(\zeta,z)=\lim_{\mathbb C\ni t\rightarrow0} \frac{K_\Omega(\zeta,z+tX)-K_\Omega(\zeta,z)}{\overline{t}}.
\]
Without loss of generality,   we assume that $|X|=1$.  Let $\delta_\Omega(z)$ denote the Euclidean distance from $z$ to $\partial\Omega$.   For $r=\delta_\Omega(z)/2$ and $ z'\in B(z,r)$,  Cauchy's estimates yield 
\begin{eqnarray}\label{eq:derivative_1}
\left|\overline{X}_z{K_\Omega}(\zeta,z')\right|^2
&\leq& \frac{C_n}{r^{2n+2}}\int_{w\in{B(z',r/2)}}|K_\Omega (\zeta,w)|^2 \nonumber\\
&\leq& \frac{C_n}{\delta_\Omega(z)^{2n+2}}\int_{w\in{B(z,3\delta_\Omega(z)/4)}}|K_\Omega(\zeta,w)|^2
\end{eqnarray}
for all $\zeta\in\Omega$.  
Here $C_n$ denotes a generic constant  depending on $n$.  It follows from \eqref{eq:derivative_1} that for $t\in\mathbb{C}$ with $0<|t|<r$,  
\begin{eqnarray*}
\left|\frac{K_\Omega (\zeta,z+tX)-K_\Omega(\zeta,z)}{\overline{t}}\right|^2
&\leq& \int^1_0\left|\overline{X}_z{K_\Omega}(\zeta,z+stX)\right|^2ds\\
&\leq& \frac{C_n}{\delta_\Omega(z)^{2n+2}}\int_{w\in{B(z,3\delta_\Omega(z)/4)}}|K_\Omega(\zeta,w)|^2.  
\end{eqnarray*}
We claim that
\[
\zeta\mapsto\int_{w\in{B(z,3\delta_\Omega(z)/4)}}|K_\Omega(\zeta,w)|^2
\]
is an integrable function on $\Omega$.  To see this,  simply note that
\begin{eqnarray*}
&& \int_{\zeta\in \Omega}\int_{w\in B(z,3\delta_\Omega(z)/4)}|K_\Omega(\zeta,w)|^2\\
&=& \int_{w\in B(z,3\delta_\Omega(z)/4)}\int_{\zeta\in \Omega} |K_\Omega(\zeta,w)|^2\ \ \ \ \  \text{(by Fubini's Theorem)}\\
&=& \int_{w\in B(z,3\delta_\Omega(z)/4)}K_\Omega(w) \ \ \ \ \  \text{(by \eqref{eq:reproducing1})}\\
& \le & C_n
\end{eqnarray*}
in view of the well-known Bergman inequality.  Since the following family of functions
\[
\left\{\Phi_t(\zeta):=f(\zeta)\overline{\left[\frac{K_\Omega (\zeta,z+tX)-K_\Omega(\zeta,z)}{\overline{t}}\right]}:\ 0<|t|<r\right\}
\]
is dominated by
\[
\Phi(\zeta):=|f(\zeta)|\left[\int_{w\in B(z,3\delta_\Omega(z)/4)}|K_\Omega(\zeta,w)|^2\right]^{1/2}\in{L^1(\Omega)},
\]
it follows immediately from   the dominated convergence theorem that
\[
Xf(z)=\lim_{\mathbb C\ni t\rightarrow0}\frac{f(z+tX)-f(z)}{t}=\lim_{\mathbb C\ni t\rightarrow0}\int_{\zeta\in\Omega}\Phi_t(\zeta)=\int_{\zeta\in\Omega}{f(\zeta)}\overline{\overline{X}_{z}K_\Omega(\zeta,z)}.
\]
\end{proof}

\section{Proof of Theorem \ref{th:Bergman_Caratheodory}}

Recall that
\begin{equation}\label{eq:XXlogK}
B_\Omega(z;X)^2=X\overline{X}\log{K_\Omega(z)}=\frac{X\overline{X}K_\Omega(z)}{K_\Omega(z)}-\frac{|\overline{X}K_\Omega(z)|^2}{K_\Omega(z)^2}.
\end{equation}
By \eqref{eq:OB},  we have
\[
\overline{X}K_\Omega(z)=\overline{X}_z{K_\Omega(\zeta,z)}|_{\zeta=z},\ \ \ \text{and}\ \ \ X\overline{X}K_\Omega(z)=X_\zeta\overline{X}_z{K_\Omega(\zeta,z)}|_{\zeta=z},
\]
where $X_\zeta$ and $\overline{X}_z$ denote derivatives with respect to $\zeta$ and $z$, respectively. Apply the reproducing formulas \eqref{eq:reproducing} and \eqref{eq:reproducing_derivatives} to the holomorphic function
\[
h(\zeta):=\overline{X}_z{K_\Omega(\zeta,z)},
\]
we obtain
\begin{equation}\label{eq:first_order_reproducing}
\overline{X}K_\Omega(z)=h(z)=\int_\Omega{h(\cdot)}\overline{K_\Omega(\cdot,z)}=\int_\Omega{\overline{X}_z{K_\Omega(\cdot,z)}}\overline{K_\Omega(\cdot,z)}
\end{equation}
and
\begin{equation}\label{eq:second_order_reproducing}
X\overline{X}K_\Omega(z)=Xh(z)=\int_\Omega{h(\cdot)}\overline{\overline{X}_z{K_\Omega(\cdot,z)}}=\int_\Omega|h|^2=\int_\Omega\left|\overline{X}_z{K_\Omega(\cdot,z)}\right|^2.
\end{equation}

Set
\[
h_0(\zeta):=b_z(\zeta)=\frac{K_\Omega(\zeta,z)}{\sqrt{K_\Omega(z)}}\ \ \ \text{and}\ \ \ h_1(\zeta):=\frac{P_{z,X}(b_z c_{z,X})(\zeta)}{\|P_{z,X}(b_z c_{z,X} )\|_{L^2(\Omega)}}.
\]
By  \eqref{eq:reproducing},  we conclude that $h_0\perp {S_{z,X}}$.  Moreover,  we have $h_0\perp h_1$,  for  
\begin{eqnarray*}
\int_\Omega{h_1}\overline{h_0} 
&=& \frac{1}{\|P_{z,X}(b_z c_{z,X})\|_{L^2(\Omega)}}\int_\Omega{P_{z,X}(b_z c_{z,X})}\overline{h_0}\\
&=& \frac{1}{\sqrt{K_\Omega(z)}\, \|P_{z,X}(b_z c_{z,X})\|_{L^2(\Omega)} }\int_\Omega b_z(\cdot) c_{z,X}(\cdot) \overline{K_\Omega(\cdot,z)}\\
&=& \frac{b_z(z)c_{z,X}(z)}{\sqrt{K_\Omega(z)}\, \|P_{z,X}(b_z c_{z,X})\|_{L^2(\Omega)} }\\
&=& 0.
\end{eqnarray*}
Since $\dim_{\mathbb{C}}{S_{z,X}^\perp}=2$, it follows that $\{h_0,h_1\}$ is a complete orthonormal basis of $S_{z,X}^\perp$.  For any $f\in{S_{z,X}}$,  we infer from \eqref{eq:reproducing_derivatives} that
\[
\int_\Omega{f\overline{h}}=\int_\Omega{f}\overline{\overline{X}_zK_\Omega(\cdot,z)}=Xf(z)=0,
\]
i.e.,  $h\in S_{z,X}^\perp$.  Thus we may write
\begin{equation}\label{eq:h_decomposition}
h=c_0h_0+c_1h_1,
\end{equation}
where
\[
c_0=\int_\Omega{h}\overline{h_0} = \frac{1}{\sqrt{K_\Omega(z)}}\int_\Omega{h(\cdot)}\overline{K_\Omega(\cdot,z)} = \frac{\overline{X}K_\Omega(z)}{\sqrt{K_\Omega(z)}}
\]
in view of \eqref{eq:first_order_reproducing},  and
\begin{eqnarray*}
\overline{c}_1
= \int_\Omega{h_1}\overline{h} 
&=& \frac{1}{\|P_{z,X}(b_z c_{z,X})\|_{L^2(\Omega)}}\int_\Omega { P_{z,X}(b_z c_{z,X})}\overline{h}\\
&=& \frac{1}{\|P_{z,X}(b_z c_{z,X})\|_{L^2(\Omega)}}\int_\Omega{b_z c_{z,X}}\overline{h}\ \ \ \ \  \text{(since }h\in{S_{z,X}^\perp}\text{)}\\
&=& \frac{1}{\|P_{z,X}(b_z c_{z,X})\|_{L^2(\Omega)}}\int_\Omega b_z c_{z,X}\overline{\overline{X}_zK_\Omega(\cdot,z)}\\
&=& \frac{X(b_z c_{z,X})(z)}{\|P_{z,X}(b_z c_{z,X})\|_{L^2(\Omega)}}\ \ \ \ \  \text{(by \eqref{eq:reproducing_derivatives})}\\
&=& \frac{ \sqrt{K_\Omega(z)} Xc_{z,X}(z) }{ \|P_{z,X}(b_z c_{z,X})\|_{L^2(\Omega)} }\\
&=& \frac{\sqrt{K_\Omega(z)} C_\Omega(z;X)}{ \|P_{z,X}(b_z c_{z,X})\|_{L^2(\Omega)} }.  
\end{eqnarray*}
These together with \eqref{eq:second_order_reproducing} and \eqref{eq:h_decomposition} yield
\begin{eqnarray}
X\overline{X}K_\Omega(z)
&=& \int_\Omega|h|^2
=  |c_0|^2+|c_1|^2\nonumber\\
&=& \frac{|\overline{X}K_\Omega(z)|^2}{K_\Omega(z)}+\frac{K_\Omega(z)\, C_\Omega(z;X)^2 }{\|P_{z,X}(b_z c_{z,X})\|_{L^2(\Omega)}^2}\label{eq:XXK}.
\end{eqnarray}
The assertion follows immediately from \eqref{eq:XXlogK} and \eqref{eq:XXK}.
\section{An alternative approach}

Following Bergman \cite{Bergman70},  we consider the minimizing problem
\begin{equation}\label{eq:minimizer_metric}
m_\Omega(z;X):=\min\left\{\|f\|_{L^2(\Omega)}:f(z)=0,Xf(z)=1\right\}.
\end{equation}
By \eqref{eq:B}, we have
\begin{equation}\label{eq:m}
m_\Omega(z;X)=\frac{1}{\sqrt{K_\Omega(z)}B_\Omega(z;X)}.
\end{equation}
Let $m_{z,X}$ be the unique minimizer for \eqref{eq:minimizer_metric}, so that $m_{z,X}(z)=0$, $Xm_{z,X}(z)=1$ and
\[
\|m_{z,X}\|_{L^2(\Omega)}=m_\Omega(z;X).
\]
Set
\begin{equation}\label{eq:b_z_X}
b_{z,X}:=\frac{m_{z,X}}{\|m_{z,X}\|_{L^2(\Omega)}}=\frac{m_{z,X}}{m_\Omega(z;X)}=\sqrt{K_\Omega(z)}B_\Omega(z;X)m_{z,X}.
\end{equation}
Note that $b_{z,X}$ is essentially a maximizer in \eqref{eq:B}.

Next, we shall use the calculus of variations in a similar way as \cite{CZ}. Let $t\in\mathbb{C}$ and $h\in{S_{z,X}}$, i.e., $h\in{A^2(\Omega)}$, $h(z)=0$ and $Xh(z)=0$. Since the function
\[
J(t):=\|m_{z,X}+th\|_{L^2(\Omega)}^2
\]
attains a minimum at $t=0$, it follows that
\[
\frac{\partial{J}}{\partial{t}}(0)=\frac{\partial{J}}{\partial\overline{t}}(0)=0.
\]
Thus
\begin{equation}\label{eq:reproducing_zero}
\int_\Omega{h}\overline{m_{z,X}}=0,\ \ \ \forall\,h\in{S_{z,X}},
\end{equation}
so that $b_{z,X}\perp{S_{z,X}}$ in view of \eqref{eq:b_z_X}. Consider
\[
\phi:=b_zc_{z,X}-\frac{C_\Omega(z;X)}{B_\Omega(z;X)}b_{z,X}\in{A^2(\Omega)}.
\]
Since $b_{z,X}(z)=c_{z,X}(z)=0$, we have $\phi(z)=0$. Moreover,
\begin{eqnarray*}
X\phi(z)
&=& b_z(z)Xc_{z,X}(z)-\frac{C_\Omega(z;X)}{B_\Omega(z;X)}Xb_{z,X}(z)\\
&=& \sqrt{K_\Omega(z)}C_\Omega(z;X)-\frac{C_\Omega(z;X)}{B_\Omega(z;X)}Xb_{z,X}(z)\\
&=& 0,
\end{eqnarray*}
for $Xb_{z,X}(z)=\sqrt{K_\Omega(z)}B_\Omega(z;X)$. Thus $\phi\in{S_{z,X}}$, so that
\[
0=P_{z,X}\phi=P_{z,X}(b_zc_{z,X})-\frac{C_\Omega(z;X)}{B_\Omega(z;X)}b_{z,X}\in{A^2(\Omega)}
\]
for $b_{z,X}\in{S_{z,X}^\perp}$, that is, \eqref{eq:B_C_1} holds. Since $\|b_{z,X}\|_{L^2(\Omega)}=1$, we conclude the proof.

\section{Examples}
It follows from \eqref{eq:Bergman_Caratheodory} and \eqref{eq:B_C_1} that the strict inequality in \eqref{eq:Hahn} holds if and only if
\begin{equation}\label{eq:strict_ineq}
\frac{C_\Omega(z;X)}{B_\Omega(z;X)}b_{z,X}\neq{b_zc_{z,X}}.
\end{equation}
This can be used to show the following

\begin{proposition}\label{prop:bidisc_neq}
Let $\mathbb{D}^2=\{z\in\mathbb{C}^2:|z_1|<1,|z_2|<1\}$ be the unit bidisc. For $X=X_1\partial/\partial{z_1}+X_2\partial/\partial{z_2}\in\mathbb{C}^2$ with $|X_1|>|X_2|>0$, we have
\[
B_{\mathbb{D}^2}(0;X) > \frac{C_{\mathbb{D}^2}(0;X)}{\|b_0 c_{0,X}\|_{L^2(\mathbb{D}^2)}}.
\]
\end{proposition}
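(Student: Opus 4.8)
The plan is to deduce the strict inequality directly from the sharp identity \eqref{eq:Bergman_Caratheodory}. Since $\|P_{0,X}(b_0c_{0,X})\|_{L^2(\mathbb{D}^2)}\le\|b_0c_{0,X}\|_{L^2(\mathbb{D}^2)}$, with equality if and only if $b_0c_{0,X}\in S_{0,X}^\perp$, formula \eqref{eq:Bergman_Caratheodory} shows that the asserted inequality is equivalent to $b_0c_{0,X}\notin S_{0,X}^\perp$, and it suffices to prove this for every Carath\'eodory extremal map $c_{0,X}$ at the origin (equivalently, one could invoke \eqref{eq:strict_ineq}). So the whole argument reduces to understanding the two–dimensional space $S_{0,X}^\perp\subset A^2(\mathbb{D}^2)$ together with the defining properties of the extremal maps.

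First I would record the data at $0$. From $K_{\mathbb{D}^2}(\zeta,z)=\pi^{-2}(1-\zeta_1\bar z_1)^{-2}(1-\zeta_2\bar z_2)^{-2}$ one gets $K_{\mathbb{D}^2}(0)=\pi^{-2}$, hence $b_0\equiv 1/\pi$ is a nonzero constant, and a direct computation gives
\[
h(\zeta):=\overline X_zK_{\mathbb{D}^2}(\zeta,z)\big|_{z=0}=\tfrac{2}{\pi^2}\bigl(\overline{X_1}\zeta_1+\overline{X_2}\zeta_2\bigr).
\]
By the proof of Theorem \ref{th:Bergman_Caratheodory}, both $b_0$ and $h$ lie in $S_{0,X}^\perp$; since $(X_1,X_2)\ne(0,0)$ they are linearly independent, and as $\dim_{\mathbb C}S_{0,X}^\perp=2$ we conclude
\[
S_{0,X}^\perp=\operatorname{span}_{\mathbb C}\bigl\{1,\ \overline{X_1}\zeta_1+\overline{X_2}\zeta_2\bigr\}.
\]

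Now suppose, for contradiction, that $b_0c_{0,X}\in S_{0,X}^\perp$. As $b_0$ is a nonzero constant, this forces $c_{0,X}=a+\beta(\overline{X_1}\zeta_1+\overline{X_2}\zeta_2)$ for some $a,\beta\in\mathbb C$, and $c_{0,X}(0)=0$ gives $a=0$. The one computation to carry out is
\[
\sup_{\mathbb{D}^2}\bigl|\overline{X_1}\zeta_1+\overline{X_2}\zeta_2\bigr|=|X_1|+|X_2|
\]
(the bound ``$\le$'' is immediate, and it is approached along $\zeta_j\to X_j/|X_j|$). Since $|c_{0,X}|<1$ on $\mathbb{D}^2$, this forces $|\beta|\le(|X_1|+|X_2|)^{-1}$, whence
\[
C_{\mathbb{D}^2}(0;X)=\bigl|Xc_{0,X}(0)\bigr|=|\beta|\bigl(|X_1|^2+|X_2|^2\bigr)\le\frac{|X_1|^2+|X_2|^2}{|X_1|+|X_2|}<|X_1|,
\]
the last step being exactly the hypothesis $|X_2|<|X_1|$ (using also $|X_2|>0$). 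On the other hand, testing the Carath\'eodory metric against the candidate $\zeta\mapsto(\overline{X_1}/|X_1|)\zeta_1$ gives $C_{\mathbb{D}^2}(0;X)\ge|X_1|$, a contradiction. Hence $b_0c_{0,X}\notin S_{0,X}^\perp$, and the proposition follows from \eqref{eq:Bergman_Caratheodory}.

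I do not expect a genuine obstacle. The only point requiring care is that the Carath\'eodory extremal $c_{0,X}$ need not be unique, so the argument must use only the defining properties $c_{0,X}(0)=0$, $|c_{0,X}|<1$, $Xc_{0,X}(0)=C_{\mathbb{D}^2}(0;X)$ — which it does. (If desired one could prove separately, via the Schwarz lemma along the analytic discs $\lambda\mapsto(\lambda t_1,\lambda t_2)\subset\mathbb{D}^2$, that $c_{0,X}(\zeta)=(\overline{X_1}/|X_1|)\zeta_1$ is the unique extremal and $C_{\mathbb{D}^2}(0;X)=|X_1|$, but this is not needed.) The strict hypothesis $|X_1|>|X_2|$ is genuinely used: when $|X_1|=|X_2|$ the chain of inequalities above degenerates to an equality, and indeed extremal maps such as $\tfrac12(\zeta_1+\zeta_2)$ then satisfy $b_0c_{0,X}\in S_{0,X}^\perp$.
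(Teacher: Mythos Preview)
Your proof is correct and follows essentially the same approach as the paper's: both reduce the strict inequality to $b_0c_{0,X}\notin S_{0,X}^\perp$ (equivalently, the failure of \eqref{eq:strict_ineq}), identify this two-dimensional space at the origin as the span of the constants and $\overline{X_1}\zeta_1+\overline{X_2}\zeta_2$, and then reach a contradiction by bounding the sup-norm of a putative linear $c_{0,X}$ against the competitor $\zeta\mapsto\zeta_1$. The only cosmetic difference is that the paper arrives at the description of $S_{0,X}^\perp$ by computing $b_{0,X}$ explicitly via an orthonormal basis, whereas you use the function $h=\overline X_zK_{\mathbb{D}^2}(\cdot,0)$ from the proof of Theorem~\ref{th:Bergman_Caratheodory} directly.
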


\begin{proof}
Take a complete orthonormal basis $\{h_j\}^\infty_{j=0}$ of $A^2(\mathbb{D}^2)$ as follows
\[
h_0\equiv\frac{1}{\pi},\ \ \ h_1(\zeta_1,\zeta_2)=\frac{\sqrt{3/2}}{\pi}\zeta_1,\ \ \ h_2(\zeta_1,\zeta_2)=\frac{\sqrt{3/2}}{\pi}\zeta_2,
\]
\[
h_j\in{A^2(\mathbb{D}^2)},\ h_j(0)=0,\ \partial{h_j}(0)=0,\ \ \ j\geq3.
\]
Since $K_{\mathbb{D}^2}(\zeta,0)\equiv1/\pi^2$ for all $\zeta\in\mathbb{D}^2$, it follows that
\begin{equation}\label{eq:b_0_bidisc}
b_0(\zeta)=\frac{K_{\mathbb{D}^2}(\zeta,0)}{\sqrt{K_{\mathbb{D}^2}(0)}}\equiv\frac{1}{\pi},\ \ \ \forall\,\zeta\in\mathbb{D}^2.
\end{equation}

Without loss of generality, let $X=(X_1,X_2)$ be a \textit{unit} vector in $\mathbb{C}^2$ with $|X_1|>|X_2|>0$. Note that $b_{0,X}$ is the unique maximizer for the extremal problem
\[
\max\left\{|Xf(0)|:f\in\mathcal{O}(\Omega),\ f(z)=0,\ \|f\|_{L^2(\mathbb{D}^2)}\leq1\right\}
\]
with $Xb_{0,X}(0)>0$. Given $f\in\mathcal{O}(\Omega)$, $f(z)=0$ and $\|f\|_{L^2(\mathbb{D}^2)}\leq1$, we have $f=\sum^\infty_{j=0}c_jh_j$, where $c_0=0$ and $\sum^\infty_{j=1}|c_j|^2\leq1$. It follows from the Cauchy-Schwarz inequality that
\begin{eqnarray*}
|Xf(0)|^2
&=& \left|c_1Xh_1(0)+c_2Xh_2(0)\right|^2\\
&\leq& \left(|c_1|^2+|c_2|^2\right)\left(|Xh_1(0)|^2+|Xh_2(0)|^2\right)\\
&\leq& |Xh_1(0)|^2+|Xh_2(0)|^2.
\end{eqnarray*}
Both inequalities become equalities when
\[
c_1=\frac{\overline{Xh_1(0)}}{\sqrt{|Xh_1(0)|^2+|Xh_2(0)|^2}}=\overline{X_1},\ \ \ c_2=\frac{\overline{Xh_2(0)}}{|Xh_1(0)|^2+|Xh_2(0)|^2}=\overline{X_2}
\]
and $c_j=0$ when $j\geq3$. Thus
\begin{equation}\label{eq:b_0_X_bidisc}
b_{0,X}(\zeta)=\overline{X_1}h_1(\zeta)+\overline{X_2}h_2(\zeta)=\frac{\sqrt{3/2}}{\pi}\left(\overline{X_1}\zeta_1+\overline{X_2}\zeta_2\right)\ \ \ \forall\,\zeta\in\mathbb{D}^2.
\end{equation}

We claim that \eqref{eq:strict_ineq} holds for $z=0$ and $X$ as above, otherwise $c_{0,X}$ can be written as
\[
c_{0,X}(\zeta)=\sqrt{\frac{3}{2}}\frac{C_\Omega(0;X)}{B_\Omega(0;X)}\left(\overline{X_1}\zeta_1+\overline{X_2}\zeta_2\right)=:\alpha\zeta_1+\beta\zeta_2,\ \ \ \forall\,\zeta\in\mathbb{D}^2,
\]
where $\alpha,\beta\neq0$, in view of \eqref{eq:b_0_bidisc} and \eqref{eq:b_0_X_bidisc}, so that
\[
1\geq\sup_{\mathbb{D}^2}|c_{0,X}|=|\alpha|+|\beta|.
\]
This together with the condition $|X_1|>|X_2|>0$ yields that
\[
|Xc_{0,X}(0)|=|\alpha{X_1}+\beta{X_2}|\leq|\alpha|\cdot|X_1|+|\beta|\cdot|X_2|<(|\alpha|+|\beta|)|X_1|\leq|X_1|.
\]
Set $f_0(\zeta):=\zeta_1$. Then $|f_0|<1$ and
\[
|Xf_0(0)|=|X_1|>|Xc_{0,X}(0)|.
\]
This contradicts with the fact that $c_{0,X}$ is a maximizer of \eqref{eq:C}. Thus \eqref{eq:strict_ineq} holds, so that
\[
B_{\mathbb{D}^2}(0;X)= \frac{C_{\mathbb{D}^2}(0;X)}{\|P_{0,X}(b_0 c_{0,X})\|_{L^2(\mathbb{D}^2)}} > \frac{C_{\mathbb{D}^2}(0;X)}{\|b_0 c_{0,X}\|_{L^2(\mathbb{D}^2)}}.
\]
\end{proof}

In contrast to Proposition \ref{prop:bidisc_neq}, we have

\begin{proposition}\label{prop:ball_eq}
Let $\mathbb{B}^2=\{z\in\mathbb{C}^2:|z_1|^2+|z_2|^2<1\}$ be the unit ball in $\mathbb{C}^2$. For any $z\in\mathbb{B}^2$ and $X=X_1\partial/\partial{z_1}+X_2\partial/\partial{z_2}\in\mathbb{C}^2$, there exists a maximizer $c_{z,X}$ of \eqref{eq:C} such that
\[
B_{\mathbb{B}^2}(z;X) = \frac{C_{\mathbb{B}^2}(z;X)}{\|b_z c_{z,X}\|_{L^2(\mathbb{B}^2)}}.
\]
\end{proposition}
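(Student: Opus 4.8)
The plan is to use the homogeneity of the ball to reduce to the point $z=0$, and then to compute everything explicitly at the origin, where the Carath\'eodory extremal is linear. Before starting, note the criterion behind \eqref{eq:strict_ineq}: by \eqref{eq:Bergman_Caratheodory} and \eqref{eq:B_C_1}, for a given maximizer $c_{z,X}$ of \eqref{eq:C} one has equality in \eqref{eq:Hahn} exactly when $\|P_{z,X}(b_zc_{z,X})\|_{L^2}=\|b_zc_{z,X}\|_{L^2}$, i.e. (as $P_{z,X}$ is an orthogonal projection) when $b_zc_{z,X}\in S_{z,X}^\perp$, equivalently $P_{z,X}(b_zc_{z,X})=b_zc_{z,X}$. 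So Proposition \ref{prop:ball_eq} amounts to producing, for each $z$ and each $X$, one maximizer $c_{z,X}$ with $b_zc_{z,X}\in S_{z,X}^\perp$. The case $X=0$ is trivial ($B_{\mathbb{B}^2}=C_{\mathbb{B}^2}=0$), so assume $X\neq0$; I would prove the statement for $\mathbb{B}^n$, any $n\geq1$, the stated case being $n=2$.

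The first step is to reduce to the origin. Fix $z$, choose $\varphi\in\mathrm{Aut}(\mathbb{B}^n)$ with $\varphi(z)=0$, and set $Y:=\varphi'(z)X\neq0$. Let $U_\varphi f:=(f\circ\varphi)\det\varphi'$, which is a unitary of $A^2(\mathbb{B}^n)$. Using the transformation law $K_{\mathbb{B}^n}(\varphi(\zeta),\varphi(w))=K_{\mathbb{B}^n}(\zeta,w)/(\det\varphi'(\zeta)\,\overline{\det\varphi'(w)})$ one checks: (i) $U_\varphi b_0=\lambda\,b_z$ for a unimodular constant $\lambda$; (ii) $U_\varphi$ carries $S_{0,Y}$ onto $S_{z,X}$ — the conditions $f(0)=0$, $Yf(0)=0$ transform, by the chain rule and the zero‑freeness of $\det\varphi'$, into $(U_\varphi f)(z)=0$, $X(U_\varphi f)(z)=0$ — hence $U_\varphi S_{0,Y}^\perp=S_{z,X}^\perp$; (iii) if $\tilde c$ is a maximizer of \eqref{eq:C} at $(0,Y)$, then $c_{z,X}:=\tilde c\circ\varphi$ is a maximizer at $(z,X)$, since $|c_{z,X}|<1$, $c_{z,X}(z)=0$, and $Xc_{z,X}(z)=Y\tilde c(0)=C_{\mathbb{B}^n}(0;Y)=C_{\mathbb{B}^n}(z;X)$ (last step by invariance of $C$). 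Because $U_\varphi(b_0\tilde c)=(U_\varphi b_0)(\tilde c\circ\varphi)=\lambda\,b_zc_{z,X}$, assertion (ii) shows that $b_0\tilde c\in S_{0,Y}^\perp$ implies $b_zc_{z,X}\in S_{z,X}^\perp$. Hence it suffices to settle the case $z=0$, for every direction.

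Next I would compute at the origin. There $K_{\mathbb{B}^n}(\zeta,w)=\frac{n!}{\pi^n}(1-\langle\zeta,w\rangle)^{-n-1}$, so $b_0\equiv\sqrt{n!/\pi^n}$ is constant and
\[
h(\zeta):=\overline{X}_zK_{\mathbb{B}^n}(\zeta,z)|_{z=0}=\frac{(n+1)!}{\pi^n}\sum_{j=1}^n\overline{X_j}\,\zeta_j .
\]
As in the proof of Theorem \ref{th:Bergman_Caratheodory}, $S_{0,X}^\perp=\mathrm{span}_{\mathbb{C}}\{b_0,h\}$. Restricting any competitor $f$ in \eqref{eq:C} to the analytic disc $\lambda\mapsto\lambda X/|X|$ and applying the Schwarz lemma gives $C_{\mathbb{B}^n}(0;X)=|X|$, and
\[
c_{0,X}(\zeta):=\frac{1}{|X|}\sum_{j=1}^n\overline{X_j}\,\zeta_j
\]
is a maximizer, since $|c_{0,X}(\zeta)|\leq|\zeta|<1$ on $\mathbb{B}^n$, $c_{0,X}(0)=0$, and $Xc_{0,X}(0)=|X|$. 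As $b_0$ is constant, $b_0c_{0,X}$ is a scalar multiple of $h$, so $b_0c_{0,X}\in S_{0,X}^\perp$, i.e. $P_{0,X}(b_0c_{0,X})=b_0c_{0,X}$; then \eqref{eq:Bergman_Caratheodory} yields $B_{\mathbb{B}^n}(0;X)=C_{\mathbb{B}^n}(0;X)/\|b_0c_{0,X}\|_{L^2(\mathbb{B}^n)}$, which with the reduction step completes the proof.

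The real content is the last observation: on the ball the Carath\'eodory extremal at the origin is, up to a constant, precisely the first‑order part $\overline{X}_zK_{\mathbb{B}^n}(\cdot,z)$ of the kernel, so that $b_0c_{0,X}$ has no component in $S_{0,X}$. The only place where care is needed is the reduction step — one must check that the single unitary $U_\varphi$ simultaneously intertwines $b_0$ with $b_z$ (up to a phase), $S_{0,Y}^\perp$ with $S_{z,X}^\perp$, and the product $b_0\tilde c$ with $b_zc_{z,X}$; this is routine but should be written out. Alternatively one can bypass $U_\varphi$ by taking $c_{z,X}=\tilde c\circ\varphi$ and checking directly, from the explicit ball kernel, that $b_zc_{z,X}\in\mathrm{span}_{\mathbb{C}}\{b_z,\overline{X}_zK_{\mathbb{B}^n}(\cdot,z)\}=S_{z,X}^\perp$ — the same computation in different clothes.
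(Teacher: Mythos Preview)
Your proof is correct and follows essentially the same route as the paper: reduce to $z=0$ by homogeneity, use the Schwarz lemma on the complex line through the origin to identify the linear Carath\'eodory extremal $c_{0,X}(\zeta)=|X|^{-1}\sum_j\overline{X_j}\zeta_j$, and observe that $b_0c_{0,X}$ lies in $S_{0,X}^\perp$. The only cosmetic difference is that the paper phrases the reduction via invariance of the scalar quantities $B$, $C$, $\|b_zc_{z,X}\|_{L^2}$ and identifies $b_0c_{0,X}$ as a multiple of $b_{0,X}$ rather than of $h=\overline{X}_zK_{\mathbb{B}^n}(\cdot,0)$, but these are the same objects up to constants.
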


\begin{proof}
Let $F:\mathbb{B}^2\rightarrow\mathbb{B}^2$ be a biholomorphic map. Clearly,
\begin{equation}\label{eq:invariance_B_C}
B_{\mathbb{B}^2}(z;X)=B_{\mathbb{B}^2}(F(z);DF(z)X),\ \ \ C_{\mathbb{B}^2}(z;X)=C_{\mathbb{B}^2}(F(z);DF(z)X),
\end{equation}
where $DF(z)$ is the complex Jacobian matrix of $F$ at $z$.  It is easy to verify that
\[
b_z(\zeta)=b_{F(z)}(F(\zeta))\det(DF(\zeta)).
\]
Moreover, a maximizer of $c_{F(z),DF(z)X}$ for $C_{\mathbb{B}^2}(F(z),DF(z)X)$ induces a maximizer $c_{z,X}$ for $C_{\mathbb{B}^2}(z;X)$ by
\[
c_{z,X}(\zeta):=c_{F(z),DF(z)X}(F(\zeta)),
\]
so that $\|b_zc_{z,X}\|_{L^2(\Omega)}=\|b_{F(z)}c_{F(z),DF(z)X}\|_{L^2(\Omega)}$. This together with \eqref{eq:invariance_B_C} imply that we only need to prove the proposition at $z=0$,   for $\mathbb{B}^2$ is a homogeneous domain.

Recall that $S_{0,X}$ is the subspace
\[
\{f\in{A^2(\mathbb{B}^2)}:f(0)=0,\ Xf(0)=0\}.
\]
It suffices to verify
\begin{equation}\label{eq:ball_eq_condition}
b_0c_{0,X}\perp{S_{0,X}}.
\end{equation}
Similarly as above, we take a complete orthonormal basis of $A^2(\mathbb{B}^2)$ as follows
\[
h_0(\zeta)\equiv\frac{\sqrt{2}}{\pi},\ \ \ h_1(\zeta_1,\zeta_2)=\frac{\sqrt{6}}{\pi}\zeta_1,\ \ \ h_2(\zeta_1,\zeta_2)=\frac{\sqrt{6}}{\pi}\zeta_2,
\]
\[
h_j\in{A^2(\mathbb{B}^2)},\ h_j(0)=0,\ \partial{h_j}(0)=0,\ \ \ j\geq3.
\]
The same argument yields
\begin{equation}\label{eq:b_0_ball}
b_0(\zeta)=\frac{K_{\mathbb{B}^2}(\zeta,0)}{\sqrt{K_{\mathbb{B}^2}(0)}}\equiv\frac{\sqrt{2}}{\pi}
\end{equation}
and
\begin{equation}\label{eq:b_0_X_ball}
b_{0,X}(\zeta)=\overline{X_1}h_1(\zeta)+\overline{X_2}h_2(\zeta)=\frac{\sqrt{6}}{\pi}\left(\overline{X_1}\zeta_1+\overline{X_2}\zeta_2\right)
\end{equation}
for all $\zeta\in\mathbb{B}^2$.

Let $X$ be a \textit{unit} vector in $\mathbb{C}^2$. For any $f\in\mathcal{O}(\mathbb{B}^2)$ with $f(0)=0$ and $|f|<1$, we consider the holomorphic function
\[
f_X(t):=f(tX),\ \ \ \forall\,t\in\mathbb{D}.
\]
Since $f_X(0)=0$ and $|f_X|<1$, we have
\[
|Xf(0)|=|f_X'(0)|\leq1
\]
in view of the Schwarz lemma. Thus $C_{\mathbb{B}^2}(0;X)\leq1$. On the other hand, if one defines
\[
\psi(\zeta):=\overline{X_1}\zeta_1+\overline{X_2}\zeta_2,\ \ \ \forall\,\zeta\in\mathbb{B}^2,
\]
then $\psi\in\mathcal{O}(\mathbb{B}^2)$, $\psi(0)=0$ and
\[
|\psi(z)|^2\leq\left(|X_1|^2+|X_2|^2\right)\left(|z_1|^2+|z_2|^2\right)<1.
\]
Moroever,
\[
|X\psi(0)|=|X_1|^2+|X_2|^2=1,
\]
so that $C_{\mathbb{B}^2}(0;X)=1$ and we may take $c_{0,X}=\psi$. This together with \eqref{eq:b_0_ball} and \eqref{eq:b_0_X_ball} imply
\[
b_0c_{0,X}=\frac{\sqrt{3}}{3}b_{0,X}.
\]
\eqref{eq:ball_eq_condition} follows immediately since $b_{0,X}\perp{S_{0,X}}$ by \eqref{eq:reproducing_zero}.
\end{proof}

It is not difficult to see that analogous results hold for polydiscs and balls in $\mathbb{C}^n$ for all $n\geq2$.

\end{document}